\newcommand{\uml}{\mathcal{UML}}
\newcommand{\ml}{\mathcal{ML}}
\newcommand{\mf}{\mathcal{MF}}
\newcommand{\ah}{\mathrm{a.h.}}
\newcommand{\glt}{\mathcal{GL}_T}
\newcommand{\ie}{i.e.\ }
\newtheorem{lemma}{Lemma}
\newtheorem{theorem}{Theorem}
\newtheorem{corollary}[lemma]{Corollary}
\theoremstyle{definition}
\begin{document}

\title[Rigidity of unmeasured lamination spaces]{A note on the rigidity of unmeasured lamination spaces}
\author{Ken'ichi Ohshika}
\address{Department of Mathematics, Graduate School of Science, Osaka University, Toyonaka, Osaka 560-0043, Japan}
\email{ohshika@math.sci.osaka-u.ac.jp}
\date{}
\maketitle
\begin{abstract}
We show that every auto-homeomorphism of the unmeasured lamination space of an orientable surface of finite type is induced by a unique extended mapping class unless the surface is a sphere with at most four punctures or a torus with at most two punctures or a closed surface of genus $2$.
\end{abstract}
\section{Introduction}
The (extended) mapping class group of an orientable surface $S$ of finite type acts on spaces related to $S$ as automorphisms in various settings.
As typical examples, we should mention the Teichm\"{u}ller space of $S$, and the curve complex of $S$.
On the Teichm\"{u}ller space, the extended mapping class group acts by isometries.
Royden's theorem (\cite{Ro}) says that conversely every self-isometry of the Teichm\"{u}ller space is induced from a unique extended mapping class group unless $S$ is a sphere with at most four punctures or a torus with at most two punctures or a closed surface of genus $2$.
Similarly, Ivanov, Korkmaz and Luo (\cite{Iv, Ko, Lu}) showed that every simplicial automorphism of the curve complex of $S$ is induced from a unique extended mapping class with the same exceptions.

We can consider the same kind of rigidity for spaces of laminations  with various topologies.
The spaces of measured laminations and projective laminations on $S$ are topologically a Euclidean space and a sphere  respectively, and  their groups of homeomorphisms are much bigger than the extended mapping class group.
In contrast, the space of unmeasured laminations $\uml(S)$ with the quotient topology coming from the measured lamination space and the space of geodesic laminations with the Hausdorff topology or the Thurston topology have much fewer symmetries than the measured lamination space or the projective lamination space.
In fact, Papadopoulos proved in \cite{Pa} that there is a dense subset $\mathcal D$ of $\uml(S)$ such that for every auto-homeomorphism $f$ of $\uml(S)$ there exists a unique extended mapping class of $S$ which induces the same homeomorphism on $\mathcal D$ as $f$ with the same exceptions on $S$ as before; a sphere with less than five punctures, a torus with less than three punctures, and a closed surface of genus $2$.
(In \cite{Pa}, this result is stated for unmeasured foliations, but it is equivalent to the statement for unmeasured laminations as above.)
Charitos, Papadoperakis and Papadopoulos considered a similar problem in \cite{CPP}  for the geodesic lamination space with  with the Thurston topology $\glt(S)$.
They showed that every auto-homeomorphism on  $\glt(S)$ is induced from a unique extended mapping class of $S$ (with the same exceptions on $S$ as above).
Both of these results are based on the result of Ivanov,  Korkmaz  and Luo which we mentioned above.

In this note, we shall refine Papadopoulos's theorem cited above to obtain the following result, in which we do not restrict homeomorphisms to a dense subset any more:
Let $S$ be an orientable surface of finite type which is not one of the exceptions listed above, and $f: \uml(S) \rightarrow \uml(S)$  a homeomorphism.
Then, we shall show that there exists a unique extended mapping class $h$ which induces $f$ on $\uml(S)$.
(Even when $S$ is a closed surface of genus $2$, the existence of $h$ is still valid.)

The author would like to express his gratitude to Athanase Papadopoulos, whose work motivated the present result, and who kindly read through a draft of this note and gave valuable comments.

\section{Preliminaries}
Throughout this note, we assume that $S$ is an orientable surface of finite type which is neither  a sphere with at most four punctures, nor a torus with at most two punctures.

Fix some complete hyperbolic metric on $S$.
A geodesic lamination is a closed subset of $S$ consisting of disjoint simple geodesics.
A transverse measure of a geodesic lamination is a measure defined on arcs transverse to leaves of the lamination, which is invariant under homotopies along leaves.
A geodesic lamination equipped with a transverse measure is called a measured lamination.
We always assume that the support of transverse measure is equal to the entire lamination.
We  endow a weak topology on the set of measured laminations on $S$ and denote it by $\ml(S)$.
It is known that for two hyperbolic metrics $m_1, m_2$ on $S$, there is a one-to-one correspondence between the geodesic laminations on $(S,m_1)$ and those on $(S,m_2)$, and $\ml(S)$ does not depend on the hyperbolic metric which we fixed at the beginning.
Also as was observed by Thurston and Levitt \cite{Le}, there is a one-to-one correspondence between the measured laminations and the measured foliations and $\ml(S)$ is homeomorphic to the measured foliation space $\mf(S)$ defined by Thurston (see \cite{FLP, Th}).
Therefore, all the results in this note also hold even if we change measured laminations to measured foliations.

The quotient space of $\ml(S)$ obtained by forgetting transverse measures is called the unmeasured lamination space, and is denoted by $\uml(S)$.
This space is all the more important as its subset consisting of maximal and minimal laminations constitutes the Gromov boundary of the curve complex of $S$, (see Klarreich \cite{Kl}).

For a homeomorphism $f$ on $S$ and a measured lamination $\lambda$, by defining $f(\lambda)$ to be the geodesic lamination homotopic to $f(\lambda)$ with the transverse measure obtained by pushing forward that of $\lambda$, we can regard $f$ as acting on $\ml(S)$.
Furthermore, it is easy to see that this action is homeomorphic, and that the action depends only on the homotopy class of $f$.
Therefore, the extended mapping class group acts on $\ml(S)$ as a group of homeomorphisms.
By taking quotient, we can also regard the extended mapping class group as acting on $\uml(S)$ by homeomorphisms.

\section{Results}
The unmeasured lamination space $\uml(S)$ is not $T_1$, that is, there may be a point which is contained in every neighbourhood of another point.
In the following lemma, we shall see for which pairs of points  this phenomenon of non-$T_1$ occurs.

We say that $F \in \uml(S)$ is unilaterally adherent to $G \in \uml(S)$ when every neighbourhood of $G$ contains $F$ whereas $G$ and $F$ are distinct.

\begin{lemma}
\label{unilaterally adherent}
The following two conditions are equivalent for two distinct unmeasured laminations $F, G \in \uml(S)$.
\begin{enumerate}
\item $G$ is contained in $F$, \ie $G$ is a sublamination of $F$.
\item $F$ is unilaterally adherent to $G$.
\end{enumerate}
\end{lemma}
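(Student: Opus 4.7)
The plan is to prove the two directions separately, in both cases exploiting the continuity of the quotient map $\pi : \ml(S) \to \uml(S)$ together with the weak topology on $\ml(S)$.

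For $(1) \Rightarrow (2)$, I would fix transverse measures $\mu_G, \mu_F$ of full support on $G$ and $F$ respectively, and look at the family $\mu_G + \epsilon \mu_F$ as $\epsilon \to 0^+$. Since $G \subset F$, the support of $\mu_G + \epsilon \mu_F$ is $G \cup F = F$, so $\pi(\mu_G + \epsilon \mu_F) = F$ for every $\epsilon > 0$. On the other hand $\mu_G + \epsilon \mu_F \to \mu_G$ in $\ml(S)$, and continuity of $\pi$ then puts $F$ inside every neighbourhood of $G = \pi(\mu_G)$ in $\uml(S)$.

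For $(2) \Rightarrow (1)$, the idea is to exhibit $\{H \in \uml(S) : H \subset F\}$ as a closed subset of $\uml(S)$ containing $F$; the hypothesis that $F$ is unilaterally adherent to $G$ means exactly that $G$ belongs to the closure of $\{F\}$, and this closure must then lie in any closed set containing $F$, forcing $G \subset F$. Working upstairs, let $D = \{\xi \in \ml(S) : \mathrm{supp}(\xi) \subset F\}$. This set is saturated under the equivalence ``same support'', so its image in $\uml(S)$ is closed precisely when $D$ itself is closed in $\ml(S)$. Closedness of $D$ reduces to a Portmanteau-style observation: if $\xi_n \to \xi$ in $\ml(S)$ with $\mathrm{supp}(\xi_n) \subset F$ for every $n$, then for any transverse arc $\alpha$ lying in the open set $S \setminus F$ with generic endpoints we have $\xi_n(\alpha) = 0$ for all $n$, hence $\xi(\alpha) = 0$ in the limit; this forces the underlying lamination of $\xi$ to avoid $S \setminus F$, \ie $\mathrm{supp}(\xi) \subset F$.

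The main delicate point is that $\uml(S)$ is not $T_1$, so sequential arguments inside $\uml(S)$ are unwieldy and the quotient map $\pi$ is not a priori open. The trick that sidesteps this is to produce the separating set $D$ already inside the well-behaved space $\ml(S)$ as a set that is both closed \emph{and} saturated; then $\pi(D)$ is automatically closed in the quotient topology without any openness assumption on $\pi$, and the lemma collapses to the two short observations above.
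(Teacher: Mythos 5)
Your proof is correct, and it splits into one half that mirrors the paper and one half that does not. For $(1)\Rightarrow(2)$ you argue exactly as the paper does: the paper uses the family $\mathcal G \sqcup t(\mathcal F\setminus\mathcal G)$, you use $\mu_G+\epsilon\mu_F$; both are measured laminations with support $F$ converging to a representative of $G$, and continuity of $\pi$ finishes it. For $(2)\Rightarrow(1)$ your route is genuinely different. The paper proves the contrapositive: if $G\not\subset F$, then some component $G_0$ of $G$ either crosses $F$ transversely or is disjoint from $F$, and in each case one writes down an explicit saturated open set in $\ml(S)$ (laminations with positive intersection number with a component of $F$, resp.\ with a curve $\gamma$ disjoint from $F$ but crossing $G_0$) whose image separates $G$ from $F$. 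You instead prove the implication directly by exhibiting $D=\{\xi\in\ml(S):\mathrm{supp}(\xi)\subset F\}$ as a closed saturated set, so that $\pi(D)$ is closed, contains $F$, and hence contains $\overline{\{F\}}\ni G$. Both arguments are sound. Yours is more uniform (no case analysis) and makes the point about saturation and the non-$T_1$ quotient explicit; its one delicate ingredient is the semicontinuity of $\xi\mapsto\xi(\alpha)$ for an open transverse arc $\alpha$ under weak convergence (equivalently, that Hausdorff limits of supports contain the support of the limit), which you flag correctly with the ``generic endpoints'' caveat. The paper's version buys something you do not get for free: the explicit separating open sets $U_{F_0}$ and $U_\gamma$ are reused verbatim in the proof of Lemma~\ref{good sequence}, whereas your closed set $\pi(D)$ would have to be supplemented there.
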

\begin{proof}
First, we shall show the first condition implies the second.
This is the same argument as in the proof of Lemma 3.1 in Papadopoulos \cite{Pa}.
Suppose that $G$ is contained in $F$.
Let $\mathcal G$ be a measured lamination representing $G$.
We can take a measured lamination $\mathcal F$ containing $\mathcal G$ as a representative of $F$.
Then, we can consider a family of measured laminations $\mathcal G \sqcup t(\mathcal F \setminus \mathcal G)$ with $t >0$, which is a representative of $F$  and converges to $\mathcal G$ in $\ml(S)$ as $t \rightarrow 0$.
Therefore, every neighbourhood of $G$ contains $F$.

Next we shall show that the second condition implies the first.
Suppose that the first condition does not hold.
Then  there is a component $G_0$ of $G$ which either intersects a component $F_0$ of $F$ transversely or is disjoint from $F$.
Suppose first that $G_0$ intersects $F_0$ transversely.
Fix some transverse measure on $F_0$ and let $\mathcal F_0$ the measured lamination which is $F_0$ endowed with the transverse measure.
Now consider an open set $U_{F_0}$ in $\ml(S)$ which is defined by $U_{F_0}=\{\mathcal H\in \ml(S) \mid i(\mathcal H,\mathcal F_0) >0\}$.
Then $\mathcal G \in U_{F_0}$, whereas $\mathcal F \not\in U_{F_0}$.
Since $\pi^{-1}\pi (U_{F_0})=U_{F_0}$, by the definition of the topology on $\uml(S)$, we see that $\pi(U_{F_0})$ is an open set containing $G$ but not $F$.
This shows that $F$ is not unilaterally adherent to $G$.

Suppose next that $G_0$ is disjoint from $F$.
Then, there is a simple closed curve $\gamma$ intersecting $G_0$ essentially which is disjoint from $F$.
We define an open set $U_\gamma$ to be $\{\mathcal H \in \ml(S) \mid i(\mathcal H, \gamma)>0\}$.
Then by the same argument as above $\pi(U_\gamma)$ is an open set containing $G$ but not $F$.
Therefore $F$ is not unilaterally adherent to $G$ also in this case.
\end{proof}

\begin{lemma}
\label{good sequence}
Let $F$ be an unmeasured lamination.
Let  $\{K_i\}$ be a sequence of multi-curves which converges to $F$ both in $\uml(S)$ and in the Hausdorff topology.
Let  $G$ be an unmeasured lamination distinct from $F$,  and suppose that $F$ is not unilaterally adherent to $G$.
Then  $\{K_i\}$ does not converge to $G$ in $\uml(S)$.
\end{lemma}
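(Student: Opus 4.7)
Via Lemma \ref{unilaterally adherent}, the hypothesis that $F$ is not unilaterally adherent to the distinct lamination $G$ is equivalent to $G\not\subset F$. The plan is to argue by contradiction: assuming $K_i\to G$ in $\uml(S)$, I will exhibit an open neighborhood of $G$ in $\uml(S)$ that fails to contain $K_i$ for all sufficiently large $i$.

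Since $G\not\subset F$, I can choose a point $q$ on some leaf of $G$ which does not lie on $F$; closedness of $F$ furnishes a neighborhood of $q$ in $S$ disjoint from $F$, inside which I pick a short open arc $\beta$ transverse to the leaf of $G$ through $q$. Then $\mathrm{dist}(\beta,F)>0$ and $\beta$ crosses $G$ transversely at $q$. Consider
\[
W \;=\; \bigl\{\mathcal{H}\in\ml(S) \;\bigm|\; \mathcal{H}(\beta)>0\bigr\},
\]
the measured laminations charging $\beta$ with positive transverse mass. This set is open in $\ml(S)$ (by lower semicontinuity of the transverse mass on open arcs) and is saturated under $\pi$: the condition $\mathcal{H}(\beta)>0$ holds exactly when $\beta$ transversely meets $\mathrm{supp}(\mathcal{H})$, a property of the support alone. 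Hence $\pi(W)$ is an open neighborhood of $G$ in $\uml(S)$, since any measured lamination supporting $G$ charges $\beta$ positively at the transverse crossing $q$. This is a variant of the sets $\pi(U_{\mathcal{F}_0})$ and $\pi(U_\gamma)$ built in Lemma \ref{unilaterally adherent}, adapted to a transverse arc rather than a simple closed curve — an adjustment needed because when $F$ fills $S$ no essential simple closed curve lies in $S\setminus F$.

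Finally, $\mathrm{dist}(\beta,F)>0$ together with the Hausdorff convergence $K_i\to F$ forces $K_i\cap\beta=\emptyset$ for all sufficiently large $i$, whence $K_i(\beta)=0$ and $K_i\notin\pi(W)$ eventually. This contradicts $K_i\to G$ in $\uml(S)$, completing the argument. The main technical step is the openness and saturation of $W$; conceptually, the point is that Hausdorff convergence provides rigid geometric information that $\uml$-convergence alone cannot detect, and an arc disjoint from $F$ acts as a transverse probe selecting out leaves of $G$ that lie outside $F$.
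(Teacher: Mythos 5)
Your proposal is correct in outline but follows a genuinely different route from the paper's. The paper splits into two cases according to whether the component of $G$ witnessing $G\not\subset F$ meets $F$ transversely or is disjoint from it: in the first case it invokes Lemma 3.1 of Papadopoulos (positive intersection number gives disjoint neighbourhoods of $F$ and $G$ in $\uml(S)$, so no sequence can converge to both --- the Hausdorff convergence is not even needed there); in the second it uses a simple closed curve $\gamma$ disjoint from $F$ and crossing $G$, together with the saturated set $\{\mathcal H \mid i(\mathcal H,\gamma)>0\}$, which is open essentially by definition of the weak topology on $\ml(S)$. You replace both cases by a single transverse-arc probe $\beta$ at a point of $G\setminus F$, which is a clean unification and correctly diagnoses why an arc rather than a closed curve is needed when $F$ fills $S$. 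The price is that the openness of $W=\{\mathcal H \mid \mathcal H(\beta)>0\}$ is no longer immediate: the parenthetical ``lower semicontinuity of the transverse mass on open arcs'' is the statement to be proved rather than a proof of it, since the weak topology on $\ml(S)$ is controlled by intersection numbers with closed curves, not by masses deposited on arcs. The fact is true --- for instance, if $\mathcal H_n\to\mathcal H$ in $\ml(S)$ then any Hausdorff limit of the supports $|\mathcal H_n|$ contains $|\mathcal H|$, and geodesics Hausdorff-close to the leaf through $q$ are $C^1$-close to it near $q$, hence still cross the open arc $\beta$ transversely at interior points; alternatively one can argue via weak-$*$ convergence of the associated geodesic currents and the portmanteau inequality for the open set of geodesics crossing the interior of $\beta$ --- but this is the crux of your argument and needs to be proved or precisely cited. (A minor related point: $\mathcal H(\beta)$ must be given a meaning for laminations to which $\beta$ is not transverse; taking it to be the mass of leaves crossing the interior of $\beta$ transversely makes $W$ well defined and manifestly saturated.) With that step supplied, the rest --- saturation of $W$, positivity of $\mathcal G(\beta)$ by fullness of support, and the eventual disjointness $K_i\cap\beta=\emptyset$ from Hausdorff convergence --- is sound.
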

\begin{proof}
Consider laminations $F$ and $G$ in $\uml(S)$ as in the statement.
Then by Lemma \ref{unilaterally adherent}, $G$ is not contained in $F$.
Let $\mathcal F$ and $\mathcal G$ be measured laminations representing $F$ and $G$ respectively.
Let $\{K_i\}$ be a sequence of multi-curves as in the statement, and $\{\mathcal K_i\}$ be a sequence of weighted multi-curves representing $K_i$ which converges to $\mathcal F$.

Suppose first that $G$ has a component  which intersects $F$ transversely.
Then we have $i(\mathcal F, \mathcal G) >0$.
Lemma 3.1 in Papadopoulos \cite{Pa} shows that there are neighbourhoods $U_F$ of $F$ and $U_G$ of $G$ in $\uml(S)$, which are disjoint.
This implies that a sequence converging to $F$ cannot converge to $G$.

Next suppose that $G$ has a component $G_0$ which is disjoint from $F$.
Then as in the proof of Lemma \ref{unilaterally adherent}, there is a simple closed curve $\gamma$ which intersects $G_0$ essentially but is disjoint from $F$.
Let $U_\gamma$ be the open set in $\ml(S)$ containing $\mathcal G$, which was defined there as $U_\gamma=\{\mathcal H \in \ml(S) \mid i(\mathcal H, \gamma)>0\}$.
Then, since $\{K_i\}$ converges to $F$ in the Hausdorff topology, $K_i$ is disjoint from $\gamma$ for sufficiently large $i$.
Therefore $\mathcal K_i$ is not contained in $U_\gamma$ for sufficiently large $i$.
This implies that $\{K_i\}$ cannot converge to $G$.
\end{proof}

For an unmeasured lamination $F$, we define, as in \cite{Oh}, its adherence height to be the maximal length $m$ of sequences $(F=F_0, \dots, F_m)$ in $\uml(S)$ such that $F_j$ is  unilaterally adherent to $F_{j+1}$, that is, $F_{j+1}$ is a proper sublamination of $F_j$ by Lemma \ref{unilaterally adherent}.
We call such a sequence an adherence tower.
We denote the adherence height of $F$ by $\ah(F)$.
We note that there is an upper bound for adherence heights depending only on $S$.
It is obvious that any auto-homeomorphism of $\uml(S)$ preserves adherence heights.

\begin{lemma}
\label{strictly less}
Suppose that $F \in \uml(S)$ is unilaterally adherent to $G$.
Then we have $\ah(G) < \ah(F)$.
\end{lemma}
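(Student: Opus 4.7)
The plan is to prove the inequality by explicit tower extension, using Lemma \ref{unilaterally adherent} to translate the topological condition of unilateral adherence into the combinatorial condition of being a proper sublamination. The statement is then essentially immediate.

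First, I would invoke Lemma \ref{unilaterally adherent} to reinterpret the hypothesis: $F$ is unilaterally adherent to $G$ means exactly that $G$ is a proper sublamination of $F$. Similarly, every adherence tower $(F_0, F_1, \dots, F_m)$ is nothing but a strictly decreasing chain of sublaminations $F_0 \supsetneq F_1 \supsetneq \cdots \supsetneq F_m$. This reinterpretation is valid on both sides of the claimed inequality.

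Next, let $k = \ah(G)$, which is finite since the paper already notes an upper bound on adherence heights depending only on $S$. Choose a realizing adherence tower $(G = G_0, G_1, \dots, G_k)$, so that each $G_{j+1}$ is a proper sublamination of $G_j$. Prepending $F$ produces the sequence $(F, G_0, G_1, \dots, G_k)$ of length $k+1$; by the hypothesis together with Lemma \ref{unilaterally adherent}, $F$ is unilaterally adherent to $G_0 = G$, and the rest of the chain is already an adherence tower. Hence this is a valid adherence tower starting at $F$ of length $k+1$, so $\ah(F) \geq k+1 > k = \ah(G)$.

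There is no real obstacle: once the equivalence in Lemma \ref{unilaterally adherent} is in hand, unilateral adherence is transitive in the sense of producing chains (strict containment is transitive), and the extension argument is one line. The only thing to verify carefully is that prepending $F$ genuinely yields a tower in the sense defined in the paper, which is immediate from the defining hypothesis.
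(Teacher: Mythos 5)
Your proposal is correct and is essentially the paper's own proof: take a tower realising $\ah(G)$, prepend $F$ using the hypothesis, and conclude $\ah(F) \geq \ah(G)+1$. The extra translation through Lemma \ref{unilaterally adherent} is harmless but not needed, since the paper's definition of an adherence tower is already stated in terms of unilateral adherence.
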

\begin{proof}
Consider an adherence tower $(G_0=G, \dots, G_m)$ realising the adherence height of $G$.
Since $F$ is unilaterally adherent to $G$, the sequence $(F, G_0, \dots , G_m)$ is also an adherence tower, which shows that $\ah(F) \geq \ah(G)+1$.
\end{proof}

\begin{theorem}
\label{main}
Let $h: \uml(S) \rightarrow \uml(S)$ be a homeomorphism.
Then there exists an extended mapping class $f$ such that $f=h$ as actions on $\uml(S)$.
If $S$ is not a closed surface of genus $2$, then such $f$ is unique.
\end{theorem}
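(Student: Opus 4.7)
The plan is to bootstrap from Papadopoulos's theorem, which furnishes a unique extended mapping class $f$ that agrees with $h$ on the dense subset $\mathcal D \subset \uml(S)$ singled out there; applying the same theorem to $h^{-1}$ and invoking Papadopoulos's uniqueness on $\mathcal D$ identifies the corresponding class with $f^{-1}$. The task is then to extend the equality $h = f$ from $\mathcal D$ (which, by a short approximation, contains all multi-curves -- the only points we will actually need) to all of $\uml(S)$.

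Fix $F \in \uml(S)$, and via a train-track approximation of a measured lamination representing $F$ choose multi-curves $\{K_i\}$ converging to $F$ both in $\uml(S)$ and in the Hausdorff topology. Continuity of $h$ gives $h(K_i) \to h(F)$ in $\uml(S)$, while $h(K_i) = f(K_i)$ since each $K_i$ lies in $\mathcal D$. Because $f$ is induced by a self-homeomorphism of $S$, the sequence $\{f(K_i)\}$ is itself a sequence of multi-curves converging to $f(F)$ simultaneously in $\uml(S)$ and in the Hausdorff topology; hence Lemma \ref{good sequence} applied to $\{f(K_i)\}$ forces every $\uml$-limit of this sequence, and in particular $h(F)$, either to coincide with $f(F)$ or, by Lemma \ref{unilaterally adherent}, to be a proper sublamination of $f(F)$. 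Running the identical argument with $h^{-1}$, $f^{-1}$ and the starting lamination $h(F)$, approximated by multi-curves $L_i \to h(F)$ in both topologies, yields the symmetric conclusion that $f(F)$ either equals $h(F)$ or is a proper sublamination of $h(F)$. Two laminations cannot each be a proper sublamination of the other, so $h(F) = f(F)$, and existence follows.

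Uniqueness is immediate from Papadopoulos's uniqueness statement on $\mathcal D$, except in the case of a closed surface of genus $2$, where the hyperelliptic involution acts trivially on $\uml(S)$ and a two-fold ambiguity persists. The main obstacle throughout is the non-$T_1$ nature of $\uml(S)$, which prevents the naive argument ``$h(K_i) \to h(F)$ and $h(K_i) = f(K_i) \to f(F)$, hence $h(F) = f(F)$'' from going through: a convergent sequence in $\uml(S)$ may simultaneously converge to every lamination inside a fixed adherence tower. The role of Lemma \ref{good sequence} is precisely to restrict the admissible $\uml$-limits of a Hausdorff-convergent sequence of multi-curves to a downward chain under the sublamination order; applying it twice, once for $h$ and once for $h^{-1}$, is what rules out the mutual-sublamination loophole and pins down the equality $h(F)=f(F)$.
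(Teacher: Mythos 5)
Your argument is correct, and its skeleton is the same as the paper's: invoke Papadopoulos to get an extended mapping class $f$ agreeing with $h$ on multi-curves, approximate an arbitrary $F$ by multi-curves $K_i$ converging in both the $\uml$ and Hausdorff topologies, and use Lemma \ref{good sequence} to conclude that $h(F)$ is either equal to $f(F)$ or a proper sublamination of it. Where you genuinely diverge is in closing the remaining loophole. The paper does this with the adherence-height invariant: Lemma \ref{strictly less} gives $\ah(h(F))<\ah(f(F))$ in the bad case, while $\ah(h(F))=\ah(F)=\ah(f(F))$ because any auto-homeomorphism preserves adherence heights, a contradiction. You instead run the whole approximation argument a second time for $h^{-1}$ and $f^{-1}$ starting from $h(F)$, obtaining the symmetric inclusion $f(F)\subseteq h(F)$, and conclude by antisymmetry of proper inclusion. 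Both routes are valid; yours dispenses with adherence heights and Lemma \ref{strictly less} entirely, at the cost of needing the auxiliary facts a second time (a doubly-convergent multi-curve approximation of $h(F)$, the identity $h^{-1}=f^{-1}$ on multi-curves, which follows since $f^{-1}$ preserves multi-curves and $h\circ f^{-1}=\mathrm{id}$ there, and Hausdorff-continuity of the action of $f^{-1}$), whereas the paper's invariant localizes the contradiction in one line and is reused elsewhere (cf.\ \cite{Oh}). One small caveat: your appeal to Papadopoulos's uniqueness to identify the class associated to $h^{-1}$ with $f^{-1}$ is unnecessary (and literally fails for closed genus $2$); the direct computation on multi-curves is what you actually need and is what makes the symmetric step work in all cases.
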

\begin{proof}
By the main result of Papadopoulos \cite{Pa}, for any given auto-ho\-meo\-mor\-phism $h$, there is an extended mapping class $f$ such that $f$ and $h$ coincide in the subset of $\uml(S)$ consisting of multi-curves.
What we have to show is that for this $f$, we have $f=h$ on the entire $\uml(S)$.

Let $F$ be an unmeasured lamination which is not a multi-curve.
Take a sequence of multi-curves $\{K_i\}$ as in Lemma \ref{good sequence}.
Since $f$ and $h$ coincide on the set of multi-curves, we see that $\{f(K_i)=h(K_i)\}$ converges to both $f(F)$ and $h(F)$.
Since $\{f(K_i)\}$ converges to $f(F)$ in the Hausdorff topology, by Lemma \ref{good sequence}, either $h(F)$ coincides with $f(F)$ or $f(F)$ is unilaterally adherent to $h(F)$.
Suppose that the latter is the case.
Then by Lemma \ref{strictly less} we have $\ah (h(F)) < \ah (f(F))$.
Since auto-homeomorphisms of $\uml(S)$ preserves  adherence heights, we see that $\ah(h(F))=\ah(F)=\ah(f(F))$.
This is a contradiction.
Thus we have shown that $f(F)=h(F)$ for any $F$ in $\uml(S)$.

The uniqueness follows from Theorem 1.1-(2) in Papadopoulos \cite{Pa}.
\end{proof}

We get the following corollary from this theorem.

\begin{corollary}
\label{auto group}
Suppose that $S$ is not a closed surface of genus $2$.
Then the group of auto-homeomorphisms of $\uml(S)$ coincides with the extended mapping class group of $S$.
\end{corollary}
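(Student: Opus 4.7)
The plan is to exhibit the natural map from the extended mapping class group of $S$ to the group $\mathrm{Homeo}(\uml(S))$ of auto-homeomorphisms of $\uml(S)$, and to show, using Theorem \ref{main}, that it is an isomorphism. From the Preliminaries we know that each extended mapping class acts on $\uml(S)$ by a homeomorphism and that this action depends only on the homotopy class of the representative; denote the resulting assignment by $\Phi$. This $\Phi$ is a group homomorphism, since composition of homeomorphisms of $S$ corresponds to composition of the induced homeomorphisms of $\uml(S)$. The corollary then amounts to the statement that $\Phi$ is bijective under the added hypothesis that $S$ is not a closed surface of genus $2$.

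Surjectivity of $\Phi$ is immediate from the existence part of Theorem \ref{main}: given any auto-homeomorphism $h$ of $\uml(S)$, the theorem provides an extended mapping class $f$ with $\Phi(f)=h$. Injectivity of $\Phi$ comes from the uniqueness part of Theorem \ref{main}: if $\Phi(f_1)=\Phi(f_2)$, then the extended mapping classes $f_1$ and $f_2$ induce the same auto-homeomorphism of $\uml(S)$, and uniqueness (valid under our standing hypothesis on $S$ together with the assumption that $S$ is not a closed surface of genus $2$) forces $f_1=f_2$.

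Since both existence and uniqueness are already established in Theorem \ref{main}, there is no substantive obstacle in proving the corollary; its content lies entirely in observing that the two halves of Theorem \ref{main} combine to say exactly that $\Phi$ is a group isomorphism between the extended mapping class group of $S$ and $\mathrm{Homeo}(\uml(S))$.
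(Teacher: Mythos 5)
Your proposal is correct and matches the paper's intent exactly: the paper derives the corollary immediately from Theorem \ref{main}, with surjectivity of the natural homomorphism coming from the existence part and injectivity from the uniqueness part. You have simply spelled out the details the paper leaves implicit.
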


\end{document}